\documentclass[12pt,a4paper]{article}
\usepackage[T1]{fontenc}
\usepackage{graphicx}
\usepackage{amssymb}
\usepackage{amsmath}
\usepackage{amsfonts}
\usepackage{amsthm}
\usepackage{graphicx} 
\usepackage{color}
\usepackage{tikz}
\usetikzlibrary{positioning}
\usepackage{latexsym}
\usepackage{amssymb}
\usepackage[all,cmtip]{xy}
\usepackage{mathrsfs}
\usepackage{rotating}
\usepackage{amsgen, amstext,amsbsy,amsopn, amsthm, amsfonts,amssymb,amscd,amsmath,euscript,enumerate,verbatim,calc}
\usepackage[dvips]{epsfig}
\usepackage{eepic}
\usepackage{fix-cm}
\usepackage[left=1in,right=1in,top=1in,bottom=1in]{geometry}

\theoremstyle{plain}
\newtheorem{thm}{Theorem}

\newtheorem{lem}{Lemma}

\newtheorem{ack}{Aknowledgements}
\theoremstyle{definition}
\newtheorem{defn}{Definition}
\theoremstyle{remark}

\numberwithin{equation}{section}

\providecommand{\keywords}[1] { \small \textbf{\textit{Keywords---}} #1 }

\title{An Equivalent form of Twin Prime Conjecture \\ connected with a sequence of arithmetic progressions}
\date{}
\author{Srikanth Cherukupally
\thanks{Email address: {\it sricheru1214@gmail.com}}}
\begin{document}
	\maketitle
\begin{abstract}
\noindent We give an equivalent form of the Twin prime conjecture 
relating to a symmetric property that is observed for terms present in a certain sequence of arithmetic progressions defined for a pair of co-prime integers.    
\end{abstract} 
\keywords{
	Twin primes, symmtricity, collection of arithmetic progression, divisors
}	
\section{Introduction}
A given pair of numbers ($X$,$X+2$) is called a {\it Twin prime pair} if both $X$ and 
$X+2$ are prime numbers. The Twin prime conjecture states that there are infinitely many twin prime pairs. The conjecture is a long standing open problem in number theory. A landmark result \cite{zhang2013} by Yitang Zang proves infinitude of primes pairs ($P$, $P+n$) with a finite bounded gap $n$ between them. It is considered to be a major breakthrough towards proving the conjecture though the proven gap $n$ is a large number less than $70\times 10^6$. Immediately after the result, Polymath project \cite{poymath2014} initiated by Terrence Tao refined the gap $n$ to $4680$. An independent result \cite{maynard2019} by James Maynard reduced the bound to $600$. The expository article \cite{articletwin} gives a beautiful account of landmark results and dramatic events that were registered in the history of twin prime conjecture.  
 
In this paper, we observe a mirror image symmetry property (called {\it symmetricity}) satisfied by leading terms of arithmetic progressions, which together form a definite collection with respect to a modular inverse property. We call this collection of arithmetic progressions a {\it sequence} since the progressions (in the collection) are strictly ordered. The sequence (of arithmetic progressions) in our current context is defined for any pair of co-prime integers
($a_0$,$d_0$). We mainly prove that, for a fixed $d_0$, the sequence of arithmetic progressions defined for ($a_0$,$d_0$), with $1\leq a_0 < d_0$, has leading terms satisfy symmetricity property only when $a_0$ corresponds to a divisor of $d_0^2-1$. This correspondence result help us give an equivalent form of twin prime conjecture. Our arguments are elementary.  

The object, sequence of arithmetic progressions, is introduced for the first time in doctoral thesis \cite{sri_phd}, and is shown to be connected with the Euclid's algorithm of finding GCD \cite{sri_euc}, and some cryptographic applications and a new key sharing method in the context of public key cryptosystem are proposed in \cite{sri_keysharing, sri_phd}.  The notation and arguments that we use here in proving the claim are taken from a published work \cite{sri_euc}. However, the paper is self-contained.     

\section{Sequence of arithmetic progressions}
We follow the notation: $A(x,y)$ is an arithmetic progression with leading term $x$ and common difference $y$.

Let $A(a,d)$
and $A(a',d')$ be two arithmetic progressions such that their terms are connected by modular multiplicative property:
$$(a+id)(a'+id') \equiv 1 \pmod{a+(i+1)d}.$$ 
We call this property as {\bf Property $\mathcal{P}$}. 
We can see that the terms of the two progressions satisfy this property only when both $(a,d)$ and $(a'd')$ are co-prime pairs. Further, for a given $A(a,d)$, $A(a',d')$ is unique when $d'\leq d$. The uniqueness of existence of $A(a',d')$ is proved in the following result.   

\begin{lem}
	For a given $A(a,d)$ with $a$ and $d$ being co-prime, there exists only one progression $A(a',d')$ with $1\leq d' \leq d$ such that the terms of the two progressions are connected by Property $\mathcal{P}$.  
\end{lem}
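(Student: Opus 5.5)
The plan is to turn Property $\mathcal{P}$, which is a family of congruences with moduli depending on $i$, into one integer identity that does not depend on $i$. Then existence and uniqueness both follow from elementary facts about inverses modulo $d$. Throughout, Property $\mathcal{P}$ is read as holding for every $i\geq 0$.

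\textbf{Step 1: reduce to a fixed integer.} Fix $i\geq 0$ and put $k=i+1$ and $n_k=a+kd$. Modulo $n_k$ we have $a+id\equiv -d$. So Property $\mathcal{P}$ becomes
$$-d\,(a'+(k-1)d')\equiv 1 \pmod{n_k}.$$
We also have $dk\equiv -a \pmod{n_k}$, so the term $-d d' k$ can be replaced by $a d'$. This gives
$$C:=a d' + d d' - d a' - 1 \equiv 0 \pmod{a+kd}\quad\text{for all } k\geq 1 .$$
The integer $C$ does not depend on $k$. Since $a$ and $d$ are coprime, $d\geq 1$, so the moduli $a+kd$ grow without bound, and only finitely many of them are at most $|C|$ in absolute value. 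Hence $C=0$. Every manipulation above is reversible, so Property $\mathcal{P}$ holds if and only if
$$a d' - d\,(a'-d') = 1 .$$

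\textbf{Step 2: existence and uniqueness.} Reducing this identity modulo $d$ gives $a d'\equiv 1\pmod d$. Since $\gcd(a,d)=1$, there is exactly one residue class of such $d'$ modulo $d$, and hence exactly one representative with $1\leq d'\leq d$. When $d=1$ that representative is $d'=1$; otherwise $d'\neq d$. Once $d'$ is fixed, $a'$ is forced to be
$$a'=d'+\frac{a d'-1}{d},$$
which is an integer by the choice of $d'$. Conversely, this pair $(a',d')$ satisfies $C=0$, so by the equivalence in Step 1 it satisfies Property $\mathcal{P}$. This proves existence and uniqueness at the same time. The identity $a d'-d(a'-d')=1$ also shows $\gcd(a',d')=1$, which agrees with the remark made before the lemma.

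\textbf{Main obstacle.} The only step needing care is Step 1: the congruences are taken modulo $a+(i+1)d$, which changes with $i$, and they have to be collapsed into the single integer $C$. The key substitution is $d(i+1)\equiv -a$ modulo $a+(i+1)d$. After that, $C=0$ follows from the unboundedness of the moduli. If the lemma were meant for a single fixed $i$ instead of all $i$, uniqueness would fail, so the "for all $i$" reading must be used.
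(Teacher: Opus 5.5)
Your proof is correct and follows essentially the paper's route: you collapse Property $\mathcal{P}$ to a single integer $C=ad'+dd'-da'-1$ that every $a+(i+1)d$ divides, conclude $C=0$, and read off $d'\equiv a^{-1}\pmod d$ and $a'=d'+\frac{ad'-1}{d}$. Your direct substitution $d(i+1)\equiv -a$ reaches $C\equiv 0$ more cleanly than the paper's detour through the differences $z_{i+1}-z_i$, and it gets the signs in $C$ right where the paper's stated $C$ has them swapped; your reversibility remark also supplies the existence half, which the paper leaves implicit.
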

\begin{proof}
	By Property $\mathcal{P}$, 
	$$(-d)(a' +id') \equiv 1 \pmod {a+(i+1)d)}.$$
	which implies
	$z_i = \frac{(a' +id')d+1}{a +(i+1)d}$ is an integer for any $i \geq 1$.
	Equivalently, 
	$z_{i+1} - z_i = \frac{Cd}{(a +id)(a +(i+1)d)}$ is an integer,
	where $C = a'd+dd'- ad' -1$. Since $a$, $d$, $a+id$ and $a+(i+1)d$ are pair-wise
	co-prime, $C$ is divisible by $a+id$ for any $i \geq 1$. 
	This implies that $C = 0$. The
	expression for C can be rewritten as
	\begin{equation}\label{eq_prop}
		a' = d' + \frac{ad'-1}{d}.
	\end{equation}

	In the above relation, $a'$ is an integer only if $d'$ is the multiplicative inverse of $a \pmod d$. There
	exists only one value for $d'$ that is less than d. When $d = 1$, we have 
	$d' = 1$ and $a' = a$. This proves the uniqueness of $A(a',d')$. 
\end{proof}
With the above result, one can inductively construct, from a given co-prime pair $(a_0,d_0)$, a unique sequence of arithmetic progressions whose common differences are in non-increasing order. The sequence is formally defined as follows for a given co-prime pair $(a_0,d_0)$. 
\begin{defn}
 $\mathfrak{S}(a_0,d_0)$  is the sequence consisting of distinct arithmetic progressions
	$$\left<A(a_0,d_0), A(a_1,d_1), A(a_2,d_2) \ldots \right>,$$ 
\end{defn}
\noindent where the terms of any two consecutive arithmetic progressions have Property $\mathcal{P}$ and common differences satisfy $d_i \geq d_{i+1} \geq 1$.

By equation \ref{eq_prop}, leading terms $a_i$ and $d_i$ satisfy the property:
\begin{eqnarray}\label{a_d_prop}
	d_{i+1} & \equiv & a_i\pmod{d_i} \nonumber \\
	a_{i+1} & = & d_{i+1} + \frac{a_id_{i+1}-1}{d_i}
\end{eqnarray}
\subsection*{Groupings of $\mathfrak{S}(a_0,d_0)$}
 A sub-collection $\mathcal{G}$ of consecutive progressions of 
 $\mathfrak{S}(a_0,d_0)$ is called a {\it grouping} if it
 satisfies the following two properties.
 \begin{itemize}
 	
 \item The difference between the common differences of any two consecutive progressions in $\mathcal{G}$ is same.
 \item $\mathcal{G}$ is maximal.
\end{itemize}
 We refer to the difference between consecutive common differences as the second
 common difference corresponding to $\mathcal{G}$. The size of $\mathcal{G}$ is the number of progressions
 in it, and is denoted by $|\mathcal{G}|$. 
 Note that any two consecutive groupings share an
 arithmetic progression.
 
 For example, the sequence $\mathfrak{S}(11,25)$ is as follows.
 \begin{eqnarray*}
 	& & 11, 36, 61, 86, ...\\
 	& &23, 39, 55, 71, ... \\
 	& & 17, 24, 31, 38, ... \\
 	& & 17, 22, 27, 32, ... \\
 	& & 13, 16, 19, 22, ... \\
 	& & 5, \, 6, 7, 8, ... \\
 	& & 5, 6, 7, 8, ... 
 \end{eqnarray*}
The sequence $\mathfrak{S}(11,25)$ has two groupings:
\begin{eqnarray*}
\mathcal{G}_1 & = & \left<A(11,25),A(23,16),A(17,7)\right>, \\
\mathcal{G}_2 & = & \left<A(17,7),A(17,5),A(13,3),A(5,1)\right>.
\end{eqnarray*}

The second common difference corresponding to $\mathcal{G}_1$ is $9$. The second common difference corresponding to $\mathcal{G}_2$ is $2$. The groupings share the progression $A(17,7)$. The
sizes of $\mathcal{G}_1$ and $\mathcal{G}_2$ are $3$ and $4$, respectively. 

\subsection*{Properties of terms within a grouping}
Let $\mathcal{G}$ be a grouping of $\mathfrak{S}(a_\alpha,d_\alpha)$ consisting of progressions
$$A(a_\alpha,d_\alpha),A(a_{\alpha+1},d_{\alpha+1}),...,A(a_{\beta} ,d_\beta),$$
for some $0\leq \alpha < \beta$. Let $\triangle$ be the second common difference corresponding to $\mathcal{G}$.
By the definition of grouping, $\triangle = d_r - d_{r+1}$, 
$\alpha \leq r \leq \beta-1$. 

\begin{lem}
For $\alpha \leq i \leq \beta$, 
$\lfloor \frac{a_i}{d_i} \rfloor = \lfloor \frac{a_\alpha}{d_\alpha} \rfloor + (i-\alpha)$ 
\end{lem}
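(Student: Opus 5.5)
The plan is to prove the one-step statement
$$\left\lfloor \frac{a_{i+1}}{d_{i+1}} \right\rfloor = \left\lfloor \frac{a_i}{d_i} \right\rfloor + 1 \qquad (\alpha \leq i \leq \beta-1),$$
and then get the lemma by induction on $i-\alpha$. The step should follow directly from the recurrence (\ref{a_d_prop}). If it works as expected, the constant second common difference $\triangle$ is not needed at all. The only structural facts used are $d_{i+1} \equiv a_i \pmod{d_i}$ with $1 \leq d_{i+1} \leq d_i$, and the distinctness of the progressions in $\mathfrak{S}(a_0,d_0)$.

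First, I rule out $d_i = 1$ for $i \leq \beta-1$. If $d_i = 1$, then $d_{i+1} = 1$, and (\ref{a_d_prop}) gives $a_{i+1} = 1 + (a_i - 1) = a_i$. So $A(a_{i+1},d_{i+1}) = A(a_i,d_i)$, which contradicts the requirement that the progressions in the sequence are distinct. Hence $d_i \geq 2$ whenever $A(a_{i+1},d_{i+1})$ exists. Since $\gcd(a_i,d_i) = 1$, we have $a_i \not\equiv 0 \pmod{d_i}$, and the condition $d_{i+1} \equiv a_i \pmod{d_i}$ with $1 \leq d_{i+1} \leq d_i$ forces $d_{i+1}$ to be exactly the least positive residue of $a_i$ modulo $d_i$, with $d_{i+1} < d_i$. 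Writing $q = \lfloor a_i/d_i \rfloor$, this gives $a_i = q d_i + d_{i+1}$.

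Next, I divide the second line of (\ref{a_d_prop}) by $d_{i+1}$ and substitute $a_i = q d_i + d_{i+1}$:
$$\frac{a_{i+1}}{d_{i+1}} = 1 + \frac{a_i}{d_i} - \frac{1}{d_i d_{i+1}} = (q+1) + \frac{d_{i+1}^2 - 1}{d_i d_{i+1}}.$$
The last fraction lies in $[0,1)$: the numerator is nonnegative because $d_{i+1} \geq 1$, and $d_{i+1}^2 - 1 < d_{i+1}^2 \leq d_i d_{i+1}$. Hence the floor of $a_{i+1}/d_{i+1}$ equals $q+1$, which is the one-step claim.

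The main obstacle is the boundary behaviour. This covers the case $d_{i+1} = 1$, where the fraction is exactly $0$ and the floor is still $q+1$. It also covers the case $d_i = 1$, where the increment fails, so the distinctness clause in the definition of $\mathfrak{S}(a_0,d_0)$ must be invoked explicitly to exclude it. Finally, I need to check that "residue equals $d_{i+1}$" really uses $d_{i+1} \leq d_i$ together with coprimality, so that the representation $a_i = q d_i + d_{i+1}$ is legitimate.
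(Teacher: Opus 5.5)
\textbf{There is a genuine error at the step where you set $a_i = q d_i + d_{i+1}$.} Your claim that $d_{i+1}$ is the least positive residue of $a_i$ modulo $d_i$ is false for the actual sequence. You took the first line of (\ref{a_d_prop}) at face value, but that line is a misprint.
\begin{itemize}
\item The second line of (\ref{a_d_prop}) is an integer only if $d_i \mid a_i d_{i+1}-1$.
\item The proof of Lemma 1 says explicitly that $d'$ is the multiplicative inverse of $a$ modulo $d$.
\end{itemize}
So the correct relation is $a_i d_{i+1} \equiv 1 \pmod{d_i}$: $d_{i+1}$ is the least positive residue of $a_i^{-1}$, not of $a_i$. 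Taken together, the two lines as printed would force $a_i^2 \equiv 1 \pmod{d_i}$. Concretely, in $\mathfrak{S}(11,25)$ one has $d_1 = 16$, not $11$. Your formula would give $a_1/d_1 = 1 + 255/400$, whereas in fact $23/16 = 1 + 175/400$. The same failure occurs at $A(17,7)\to A(17,5)$, where $17 \bmod 7 = 3 \neq 5$. So the expression $\frac{d_{i+1}^2-1}{d_i d_{i+1}}$ for the fractional part is wrong, even though the conclusion happens to be true.

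\textbf{The repair is easy, and your strategy survives.} You never need to identify the residue with $d_{i+1}$. Let $r = a_i - q d_i$. Coprimality together with $d_i \geq 2$, which your distinctness argument correctly supplies, gives $1 \leq r \leq d_i - 1$. Your (correct) identity $a_{i+1}/d_{i+1} = 1 + a_i/d_i - 1/(d_i d_{i+1})$ then becomes
\[
\frac{a_{i+1}}{d_{i+1}} = (q+1) + \frac{r d_{i+1} - 1}{d_i d_{i+1}}, \qquad 0 \leq r d_{i+1} - 1 \leq (d_i-1)d_{i+1} - 1 < d_i d_{i+1},
\]
so the floor is $q+1$.

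\textbf{How the repaired proof compares with the paper's.} The routes genuinely differ.
\begin{itemize}
\item The paper uses the constant second difference to show that $(a_i\triangle+1)/d_i$ increases by exactly $\triangle$ at each step. It leaves the passage from this to the floors implicit, and the argument is confined to a single grouping.
\item Your one-step identity never uses $\triangle$. It therefore gives the increment for every consecutive pair in $\mathfrak{S}(a_0,d_0)$. That is what the paper's subsequent remark, that $\lfloor a/d \rfloor$ is a position index in the whole sequence, actually requires.
\end{itemize}
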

\begin{proof}
		Firstly, note that, for $\alpha \leq i \leq \beta-1$,  
	\begin{eqnarray*}
		\frac{a_{i+1}\triangle+1}{d_{i+1}} - \frac{a_i\triangle+1}{d_i} 
		& = &  \triangle \bigg(\frac{a_{i+1}d_i - a_id_{i+1} +1}{d_id_{i+1}}\bigg) \\
		& = & \triangle \,\,\, \textrm{(By Equation \ref{a_d_prop})}
	\end{eqnarray*}
\end{proof}
By the above property, for a progression $A(a,d)$ in $\mathfrak{S}(a_0,d_0)$, 
$\lfloor \frac{a}{d} \rfloor$ gives the position index of the progression within the sequence $\mathfrak{S}$. The following result gives a defining equation satisfied by leading terms of the progressions in $\mathcal{G}$. 
\begin{lem}
	For $\alpha \leq i\leq \beta$, 
	$$a_i = a_\alpha + \triangle(\beta-i)(i-\alpha)(d_\beta -z_\alpha),$$  
	where $z_\alpha = \frac{a_\alpha\triangle+1}{d_\alpha}.$
\end{lem}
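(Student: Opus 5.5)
The plan is to turn the recurrence~(\ref{a_d_prop}) into a second-order linear recurrence for the leading terms $a_i$ inside $\mathcal{G}$, solve it as a quadratic polynomial in $i$, and then match that polynomial against the right-hand side $a_\alpha + \triangle(\beta-i)(i-\alpha)(d_\beta - z_\alpha)$.

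First, inside $\mathcal{G}$ we have $d_i = d_\alpha - (i-\alpha)\triangle$. Set $z_i = \frac{a_i\triangle+1}{d_i}$. The computation in the proof of the previous lemma gives $z_{i+1} - z_i = \triangle$, so
$$z_i = z_\alpha + (i-\alpha)\triangle \qquad \text{and} \qquad a_i = \frac{z_i d_i - 1}{\triangle}.$$
Substituting both linear expressions shows that $a_i$ is a quadratic polynomial in $i$. Its leading coefficient is $-\triangle^2\cdot\triangle/\triangle = -\triangle^2$ divided appropriately, and it is cleanest to compute the second difference directly, $a_{i+1}-2a_i+a_{i-1}$. The differences $d_i - d_{i+1}$ are constant, and Property $\mathcal{P}$ relates $z_i$ to $d_{i+1}$: from~(\ref{a_d_prop}), $a_{i+1} = d_{i+1} + \frac{a_i d_{i+1}-1}{d_i}$.

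Next, I will use the endpoint data to fix the three coefficients. The value $a_\alpha$ is given. The value $a_\beta$ comes from maximality of the grouping, which forces $d_{\beta+1}\neq d_\beta-\triangle$. I expect this to pin $a_\beta$ down relative to $d_\beta$ and $z_\alpha$, and this is where the factor $d_\beta - z_\alpha$ should enter. Writing the quadratic as $a_\alpha + (i-\alpha)\bigl[c_1 + c_2(\beta - i)\bigr]$, the vanishing of the correction at $i=\alpha$ is automatic. I will then compute $c_1$ and $c_2$ from the first difference $a_{\alpha+1}-a_\alpha$ and from the second difference, both expressed in terms of $\triangle$, $d_\beta$ and $z_\alpha$ via~(\ref{a_d_prop}). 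The final step is to show these coefficients collapse to the single product $\triangle(\beta-i)(i-\alpha)(d_\beta - z_\alpha)$.

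The main obstacle is this last matching step, because the stated form requires the linear term $c_1$ to vanish. Before writing the argument up, I will test it on $\mathcal{G}_1$ of $\mathfrak{S}(11,25)$, where $\triangle=9$, $z_\alpha=4$, $d_\beta=7$, $\beta-\alpha=2$. At $i=\alpha+1$ the stated formula gives $11+9\cdot1\cdot1\cdot3=38$, whereas the listed $a_1$ is $23$. So the check already indicates that the identity as worded fails for this grouping. The quadratic obtained in the second step would then be the correct closed form, and it would not coincide with the stated product. If the check is confirmed, the proof cannot be completed as stated, and the derivation above is what would show exactly which term differs.
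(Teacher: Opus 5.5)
Your numerical check is correct and decisive: the lemma as printed is false. On $\mathcal{G}_1$ of $\mathfrak{S}(11,25)$ the displayed formula gives $38\neq 23=a_1$. More generally, at $i=\beta$ it forces $a_\beta=a_\alpha$ in every grouping, whereas here $a_2=17\neq 11=a_0$.

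The paper's own proof does not establish the displayed identity either. Its final line is
\[
a_i-a_\alpha=\triangle(\beta-i)(i-\alpha)+(i-\alpha)(d_\beta-z_\alpha),
\]
and this additive form is what the proof of the Theorem actually uses (with $\alpha=0$). So the statement is a misprint, and the corrected version checks on your example: $11+9\cdot1\cdot1+1\cdot3=23$ and $11+0+2\cdot 3=17$.

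Where your proposal falls short is that it stops at the refutation instead of carrying your own plan through to this corrected formula. Parts of the sketch also point in the wrong direction. No second differences and no maximality are needed:
\begin{itemize}
\item Substituting $d_{i+1}=d_i-\triangle$ into $a_{i+1}=d_{i+1}+(a_id_{i+1}-1)/d_i$ gives $a_{i+1}-a_i=d_{i+1}-z_i$.
\item Using $z_i=z_\alpha+(i-\alpha)\triangle$ (the previous lemma's computation) and $d_{i+1}=d_\beta+(\beta-i-1)\triangle$, the first difference is $(\alpha+\beta-1-2i)\triangle+d_\beta-z_\alpha$.
\item Telescoping from $\alpha$ to $i$ yields the formula above.
\end{itemize}
That is exactly the paper's argument. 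In your ansatz $a_\alpha+(i-\alpha)\bigl[c_1+c_2(\beta-i)\bigr]$ it means $c_2=\triangle$ and $c_1=d_\beta-z_\alpha$. So $c_1$ vanishes precisely when $z_\alpha=d_\beta$, which is the symmetricity condition the Theorem is about.

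Two smaller corrections. First, maximality of the grouping plays no role: $\beta$ enters only through $d_\beta=d_\alpha-(\beta-\alpha)\triangle$, so the identity holds on any run with constant second common difference. Second, the leading coefficient of $a_i$ as a polynomial in $i$ is $-\triangle$, not $-\triangle^2$.
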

\begin{proof}

Further, for $\alpha \leq i \leq \beta-1$, 
\begin{eqnarray*}
	a_{i+1} - a_i & = & -\frac{a_{i+1}\triangle+1}{d_{i+1}} + \triangle + d_{i+1} \\
	& = & -\frac{a_{i+1}\triangle+1}{d_{i+1}} + \triangle + (d_\beta+(\beta-i-1)\triangle) \\
	& = & -(z_\alpha + (i+1-\alpha)\triangle) + d_\beta + (\beta-i)\triangle \\
	& = & (\alpha+\beta-1-2i)\triangle + d_\beta - z_\alpha. 
\end{eqnarray*}  
We thus have, for $\alpha \leq i \leq \beta$, 
\begin{eqnarray*}
	a_i - a_\alpha & = & \sum_{j=\alpha}^{i-1} (a_{j+1}-a_j) \\
	& = & \sum_{j=\alpha}^{i-1} \big(\triangle(\alpha+\beta-1-2j) + d_\beta-z_\alpha \big) \\
	& = & \triangle(\beta - i)(i-\alpha) + (i-\alpha)(d_\beta-z_\alpha). 
\end{eqnarray*}
\end{proof}

\subsection*{Symmetricity property of leading terms}
We can notice that leading terms of the progressions (in $\mathcal{G}$) are evaluations of a polynomial $f(x) = ax^2 +bx+c$ at integer values, where $a, b, c$ are some constants specific to $\mathcal{G}$. Since $a < 0$, $f(x)$ defines an inverted parabola. This gives an indication that some leading terms may appear more than once. This repetitive pattern of leading terms essentially gives rise to a mirror image symmetry about middle progression in the grouping. We call it {\it symmetricity} property. 

For example, the leading terms of first six initial progressions of 
$\mathfrak{S}(17,23)$ follow symmetricity. These 6 progressions form the first grouping. 
\begin{eqnarray*}
	& & 17, 40, 63, 86, 109 ... \\
	& & 33, 52, 71, 90, 109 ... \\
	& & 41, 56, 71, 86, 101 ... \\
	& & 41, 52, 63, 74, 85 ... \\
	& & 33, 40, 47, 54, 61 ... \\
	& & 17, 20, 23, 26, 29 ... \\
	& & 13, 15, 17, 19, 21 ... \\
	& & 7, 8, 9, 10, 11 ...
\end{eqnarray*}

Leading terms satisfy the property subjected to the quantity 
$d_\beta - z_\alpha$ in the defining equation of leading terms. 
In the following main result we prove a condition for symmetricity property in the first grouping $\mathcal{G}$.  
 
\begin{thm}
	The first grouping of $\mathfrak{S}(a_0,d_0)$ has symmetricity if and only if $\triangle$ divides $d_0^2-1$.  
\end{thm}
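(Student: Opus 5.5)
The plan is to reduce symmetricity to a single equality, $z_0 = d_\beta$, and then to compute both sides modulo $\triangle$.

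\emph{Step 1: symmetricity means $z_0=d_\beta$.} I take $\alpha=0$ and assume $1\le a_0<d_0$. The telescoping computation in the proof of the preceding lemma gives
$$a_i = a_0 + \triangle(\beta-i)\,i + i\,(d_\beta - z_0), \qquad 0\le i\le \beta,$$
where $z_0=(a_0\triangle+1)/d_0$. The first two terms are invariant under $i\mapsto \beta-i$. Hence $a_i=a_{\beta-i}$ for all $i$ holds if and only if $(2i-\beta)(d_\beta-z_0)=0$ for all $i$. For a grouping of size at least $2$, this is equivalent to $z_0=d_\beta$.

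\emph{Step 2: identify $d_\beta$.} Here I use the form of Lemma 1 actually proved there: $d_{i+1}$ is the inverse of $a_i$ modulo $d_i$, taken in $[1,d_i]$. This agrees with the example, since $11^{-1}\equiv 16 \pmod{25}$. Hence $\triangle=d_0-d_1\equiv -a_0^{-1}\pmod{d_0}$, so $d_0\mid a_0\triangle+1$ and $z_0$ is an integer.

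I then show by induction that, as long as $d_i-\triangle\ge 1$, the grouping continues with $d_{i+1}=d_i-\triangle$. The condition $d_{i+1}=d_i-\triangle$ is equivalent to $a_i(d_i-\triangle)\equiv 1\pmod{d_i}$, that is, $d_i\mid a_i\triangle+1$. The computation in Lemma 2 gives $(a_{i+1}\triangle+1)/d_{i+1}=z_i+\triangle$, so the quotient $z_i$ stays an integer and the divisibility propagates.

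Consequently the grouping stops exactly when $d_i-\triangle<1$. Since $\gcd(\triangle,d_0)=1$ by the congruence above, this gives $d_\beta=r$, where $r$ is the residue of $d_0$ modulo $\triangle$ in $[1,\triangle]$. For $\triangle>1$ we have $1\le r<\triangle$. I must also check that the next common difference $d_{\beta+1}$ really breaks the pattern, so that the grouping is maximal; this is immediate because $d_{\beta+1}\ge 1>d_\beta-\triangle$.

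\emph{Step 3: locate $z_0$ modulo $\triangle$ and compare.} From $z_0d_0=a_0\triangle+1$ we get $z_0\equiv d_0^{-1}\pmod{\triangle}$. The bounds $1\le a_0<d_0$ give $1\le z_0\le \triangle$. So $z_0$ is the unique representative of $d_0^{-1}$ in $[1,\triangle]$, and $d_\beta=r$ is the unique representative of $d_0$ in the same range. Therefore $z_0=d_\beta$ if and only if $d_0\equiv d_0^{-1}\pmod{\triangle}$, that is, $\triangle\mid d_0^2-1$.

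\emph{Main obstacle.} The delicate part is Step 2: pinning down exactly where the first grouping ends and proving $d_\beta=d_0 \bmod \triangle$. This relies on the corrected recurrence (inverse rather than residue). I also need to treat the boundary cases separately: $\triangle=1$, where everything divides $d_0^2-1$ and the grouping runs down to $d_\beta=1=z_0$, and a grouping of size $2$, where symmetricity must be read appropriately.
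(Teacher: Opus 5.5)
Your proposal is correct and follows the same route as the paper. Like the paper, you reduce symmetricity to $z_0=d_\beta$ via $a_i=a_0+\triangle(\beta-i)i+i(d_\beta-z_0)$, then compare $z_0\equiv d_0^{-1}$ with $d_\beta\equiv d_0 \pmod{\triangle}$. Your Step 2 and the bound $1\le z_0\le\triangle$ supply what the paper only asserts: both quantities lie in $[1,\triangle]$, so the congruence forces equality, and this genuinely needs $a_0<d_0$. You also rightly use the inverse form of the recurrence.
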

\begin{proof}
	Since $\mathcal{G}$ is the first grouping, $\alpha = 0$ and the leading terms' defining equation becomes 
		$$a_i = a_0 + \triangle(\beta-i)(i)+ i(d_\beta -z_0),$$ 
		where $z_0 = \frac{a_0\triangle+1}{d_0}$, and $\beta > 0.$. We have $z_0\equiv d_0^{-1} = d_\beta^{-1} \pmod \triangle$. If $\triangle$ divides
		$d_0^2-1$, then $d_\beta^2 \equiv 1 \pmod \triangle$, i.e., $d_\beta$ is self-invertible modulo $\triangle$. This implies 
		$z_0 = d_\beta$. So, $a_i = a_{\beta-i}$, $0\leq i\leq \beta$. 
		Conversely, if $a_i = a_{\beta-i}$, $0\leq i\leq \beta$, then 
		$z_0 = d_\beta$ and thus $\triangle$ divides $d_0^2-1$. 
\end{proof}
Let $S(d_0)$ be the number of positive integers $a_0 < d_0$ such that the leading terms of progressions in the first grouping of $\mathfrak{S}(a_0,d_0)$ exhibit symmetricity. There are total $\phi(d_0)$ values for $a_0 (<d_0)$ co-prime to $d_0$. Out of $\phi(d_0)$ numbers, there are $\frac{\sigma(d_0^2-1)}{2}$ values for $a_0$ that belong to the set $S(d_0)$. We can see that the size of $S(d_0)$ is 2 if and only if both $d_0-1$ and $d_0+1$ are primes. Thus, {\it proving the twin prime conjecture is equivalent to proving there exist infinitely many $d_0$ with $|S(d_0)| = 2$.}

\begin{ack}
	The author is thankful to Prof. B. Sury, (ISI, Bangalore), for his brief review of the manuscript and his endorsement for uploading it to Arxiv. 
\end{ack}
\end{document}